\documentclass[a4paper]{amsart}
\usepackage{lmodern}
\usepackage{amsthm}
\usepackage{amssymb}
\usepackage{amsmath}
\usepackage{hyperref}
\hypersetup{
  unicode=true,
  colorlinks=true,
  citecolor=blue,
  linkcolor=blue,
  anchorcolor=blue
}
\usepackage{todonotes}
\usepackage{tikz-cd}
\usepackage{dsfont,mathrsfs}
\usepackage{graphicx}

\usepackage[noadjust,nocompress]{cite}

\ifdefined\newaliascnt
\else
  \usepackage{aliascnt}
\fi

\newtheorem{theorem}{Theorem}[section]
\newaliascnt{claim}{theorem}

\aliascntresetthe{claim}

\newaliascnt{lemma}{theorem}
\newtheorem{lemma}[lemma]{Lemma}
\aliascntresetthe{lemma}

\newaliascnt{proposition}{theorem}

\aliascntresetthe{proposition}

\newaliascnt{corollary}{theorem}
\newtheorem{corollary}[corollary]{Corollary}
\aliascntresetthe{corollary}

\newaliascnt{fact}{theorem}
\newtheorem{fact}[fact]{Fact}
\aliascntresetthe{fact}

\theoremstyle{definition}
\newaliascnt{remark}{theorem}

\aliascntresetthe{remark}

\newaliascnt{definition}{theorem}
\newtheorem{definition}[definition]{Definition}
\aliascntresetthe{definition}

\newaliascnt{question}{theorem}

\aliascntresetthe{question}

\newaliascnt{conjecture}{theorem}

\aliascntresetthe{conjecture}

\DeclareMathOperator{\crit}{crit}

\DeclareMathOperator{\rank}{rank}

\let\leq\leqslant

\newcommand{\ZF}{\mathsf{ZF}}
\newcommand{\ZFC}{\mathsf{ZFC}}

\newcommand{\HS}{\mathsf{HS}}

\newcommand{\VP}{\mathsf{VP}}
\newcommand{\SVC}{\mathsf{SVC}}

\newcommand{\cC}{\mathcal C}

\newcommand{\cL}{\mathcal L}
\newcommand{\cS}{\mathcal S}

\newcommand{\PP}{\mathbb P}

\newcommand{\forces}{\mathrel{\Vdash}}

\newcommand{\1}{\mathds 1}

\newcommand{\tup}[1]{\langle#1\rangle}

\subjclass[2020]{Primary 03E55; Secondary 03E25, 03E35}
\keywords{Vop\v{e}nka's Principle, symmetric extensions}
\date{October 17, 2026}
\title[Short proof that VP is symmetrically stable]{Short proof that Vop\v{e}nka's Principle\\ is symmetrically stable}
\author{Asaf Karagila}
\address[A.~Karagila]{School of Mathematics
  University of Leeds
  Leeds, LS2~9JT, UK}
\email{karagila@math.huji.ac.il}
\thanks{The research was supported by UKRI Future Leaders Fellowship [MR/Y034058/1] and by Engineering and Physical Sciences Research Council [UKRI5109].}
\thanks{No data are associated with this article. For the purpose of open access, the authors have applied a CC-BY licence to any Author Accepted Manuscript version arising from this submission.}
\begin{document}
\begin{abstract}
We show that Vop\v{e}nka's Principle is stable under symmetric extensions. Recently, de Groot and Wo{\l}oszyn \cite{VP} proved this fact. We found the proof of this theorem unnecessarily complicated. In this short note we prove the same statement. We direct the reader to \cite{KS:Towards} for an exposition on symmetric extensions.
\end{abstract}
\maketitle
\section{Basic definitions}
We write $\rank(x)$ to denote the least $\alpha$ for which $x\subseteq V_\alpha$. Similarly, if $\PP$ is a forcing notion and $\dot x$ is a $\PP$-name, $\rank_\PP(\dot x)$ is the least $\alpha$ such that $\dot x\subseteq\PP\times V_\alpha^\PP$.
\begin{definition}
  We say that $\kappa$ is \emph{$\Sigma_n$-correct}, denoted by $\kappa\in C(n)$, if $V_\kappa\prec_{\Sigma_n} V$. Namely, for every $\Sigma_n$ formula, $\varphi(x)$, and $a\in V_\kappa$, $V\models\varphi(a)\iff V_\kappa\models\varphi(a)$
\end{definition}
\begin{lemma}\label{lemma:name-rank}
  Let $\alpha\in C(1)$ and a forcing notion $\PP\in V_\alpha$. If $\dot a\in V^\PP_\alpha$, then $\dot a\in V_\alpha$.
\end{lemma}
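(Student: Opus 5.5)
Here $V^\PP_\alpha$ is the $\alpha$-th level of the name hierarchy: $V^\PP_0=\emptyset$, $V^\PP_{\beta+1}=\mathcal P(\PP\times V^\PP_\beta)$, and unions at limits. The plan is to show, by induction on $\beta<\alpha$, that $V^\PP_\beta\subseteq V_\alpha$. Then $V^\PP_\alpha=\bigcup_{\beta<\alpha}V^\PP_\beta\subseteq V_\alpha$, since $\alpha$ is a limit ordinal.

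\textbf{Preliminaries.} First I record that $\alpha\in C(1)$ forces $\alpha$ to be a limit ordinal, indeed a strong limit cardinal. For every $x\in V_\alpha$ the statement ``$\mathcal P(x)$ exists'' holds in $V$. It is $\Sigma_1$, witnessed by a transitive set containing $x$ and closed enough to compute power sets correctly, namely some $V_\gamma$; ``$y=V_\gamma$'' is $\Pi_1$ but becomes $\Delta_1$ once a transitive witness model is supplied. Hence $V_\alpha$ contains $\mathcal P(x)$. Alternatively, I will cite the standard fact that every $\alpha\in C(1)$ satisfies $V_\alpha=H_\alpha$ and $\alpha$ is an uncountable strong limit cardinal. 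This is the step I expect to need the most care: I would simply quote the folklore characterization $C(1)=\{\kappa : \kappa \text{ uncountable cardinal}, V_\kappa=H_\kappa\}$. What I actually use is that $V_\alpha$ is closed under pairs, finite products and power sets.

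\textbf{Induction.} Fix $\beta<\alpha$ and assume $V^\PP_\gamma\subseteq V_\alpha$ for all $\gamma<\beta$.
\begin{itemize}
\item If $\beta$ is a limit, then $V^\PP_\beta=\bigcup_{\gamma<\beta}V^\PP_\gamma$. To conclude this is a subset of $V_\alpha$ is immediate from the hypothesis. I will also want $V^\PP_\beta\in V_\alpha$ for the successor step. For that I will show the stronger statement: by recursion, the function $\gamma\mapsto V^\PP_\gamma$ for $\gamma\le\beta$ is defined in $V$ by a $\Sigma_1$ formula with parameters $\PP,\beta\in V_\alpha$. Its value therefore exists in $V_\alpha$ and is correct there by $\Sigma_1$-elementarity, since set-recursion is absolute between transitive models of enough set theory.
\item If $\beta=\gamma+1$, then $V^\PP_\gamma\in V_\alpha$ and $\PP\in V_\alpha$. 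So $\PP\times V^\PP_\gamma\in V_\alpha$, hence its power set lies in $V_\alpha$, as $\alpha$ is a limit of ranks.
\end{itemize}

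\textbf{Cleaner route.} In fact the cleanest version is to prove $V^\PP_\beta\in V_\alpha$ for all $\beta<\alpha$ directly by $\Sigma_1$-correctness. The statement $\exists f\,(f$ is the function $\gamma\mapsto V^\PP_\gamma$ on $\beta+1)$ is $\Sigma_1$ in the parameters $\PP,\beta$ and true in $V$. It thus has a witness in $V_\alpha$, and that witness is the true $f$. The reason is that being this function is verifiable by a formula using power sets; I handle this via the strong-limit property, so that power sets computed in $V_\alpha$ are true power sets because $V_\alpha$ is closed under $\mathcal P$.

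\textbf{Conclusion.} Given $\dot a\in V^\PP_\alpha$, we get $\dot a\in V^\PP_\beta$ for some $\beta<\alpha$, so $\dot a\in V^\PP_\beta\in V_\alpha$. Transitivity of $V_\alpha$ then gives $\dot a\in V_\alpha$.
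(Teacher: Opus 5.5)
Your proof has a genuine gap at the limit stages of the induction, which is where all the content of the lemma lies. Everything you use in the preliminaries and in the successor step holds for $V_\alpha$ for \emph{every} limit ordinal $\alpha$, with no correctness needed: closure under pairs, products and power sets. Indeed, if $x\in V_\gamma$ with $\gamma<\alpha$, then $\mathcal P(x)\subseteq V_{\gamma+1}$, so $\mathcal P(x)\in V_{\gamma+2}\subseteq V_\alpha$.

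But the lemma is false for arbitrary limits. Take $\alpha=\omega\cdot 2$ and the trivial forcing $\PP=\{\omega\}$, so $\rank(\PP)=\omega+1<\alpha$. Iterating $\dot x\mapsto\{\langle\omega,\dot x\rangle\}$ from $\emptyset$ produces names in $V^\PP_\omega$ of rank $\omega+3n$, and all names in $V^\PP_\omega$ have rank below $\omega\cdot 2$. Hence $\PP\times V^\PP_\omega\in V^\PP_{\omega+1}\subseteq V^\PP_\alpha$ has rank exactly $\omega\cdot 2$, so it is not in $V_\alpha$. So at a limit $\beta<\alpha$ you really must show $\sup_{\gamma<\beta}\rank(V^\PP_\gamma)<\alpha$. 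The hypothesis $V^\PP_\gamma\in V_\alpha$ for $\gamma<\beta$ does not give this, since members of $C(1)$ can have cofinality $\omega$.

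Your justification for this step does not hold up. In ``$f$ is the function $\gamma\mapsto V^\PP_\gamma$ on $\beta+1$'', the clause $f(\gamma+1)=\mathcal P(\PP\times f(\gamma))$ says that every subset of $\PP\times f(\gamma)$ belongs to $f(\gamma+1)$, which is $\Pi_1$. So $\exists f(\dots)$ is $\Sigma_2$, not $\Sigma_1$, and $V_\alpha\prec_{\Sigma_1}V$ yields no witness in $V_\alpha$. For the same reason, ``$y=V_\gamma$'' does not become $\Delta_1$ by supplying a transitive model, since that model would have to compute power sets correctly, which is again $\Pi_1$.

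Closure of $V_\alpha$ under $\mathcal P$ only shows that a witness \emph{already in} $V_\alpha$ would be the true sequence; it does not show that one exists there. Nor can you run the recursion inside $V_\alpha$, which need not satisfy Replacement.

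The missing ingredient is the rank estimate the paper uses. A routine induction on $\rank_\PP$ gives $\rank(\dot a)\le\rank(\PP)+\omega\cdot\rank_\PP(\dot a)$. The paper's stated bound $\max\{\rank(\PP),\beta\}+\omega$ is slightly too small, but the idea is the same. Then use that $\alpha$ is closed under ordinal arithmetic. This \emph{does} follow from $\Sigma_1$-correctness: for $\rho=\rank(\PP)$ and $\beta<\alpha$, $\exists\delta\,(\delta=\rho+\omega\cdot\beta)$ is a genuine $\Sigma_1$ statement (ordinal arithmetic is $\Delta_1$) with parameters in $V_\alpha$. This bounds the ranks of all names in $V^\PP_\alpha$ uniformly below $\alpha$, and the limit stages take care of themselves.
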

\begin{proof}
  First, note that if $\alpha\in C(1)$, then $\alpha$ is a limit ordinal and whenever $\beta,\gamma<\alpha$, then $\beta+\gamma<\alpha$ as well. Suppose now that $\PP\in V_\alpha$. We prove by induction on the rank of $\dot a\in V^\PP_\alpha$, that if $\rank_\PP(\dot a)=\beta$, then $\dot a\subseteq V_{\max\{\rank(\PP),\beta\}+\omega}$. As $\alpha>\beta$ is an indecomposable ordinal, this would imply the wanted conclusion.

  Suppose this is true for all the names which appear in $\dot a$. Let $\tup{p,\dot x}\in\dot a$, then using the standard Kuratowski pairing, $\rank(\tup{p,\dot x})<\max(\rank(\PP),\rank(\dot x))+\omega$. Therefore, $\rank(\dot a)\leq\max\{\rank(\PP),\beta)\}+\omega<\alpha$ as wanted.
\end{proof}
\begin{definition}[Mohammd {\cite[Definition~2.1]{Marwan}}]
  We say that $\delta$ is an \emph{$n$-choiceless-extendible cardinal} if for every $\alpha\in C(n)\setminus\delta$ and every $\gamma<\delta$ there is some $\beta$ and an elementary embedding $j\colon V_\alpha\to V_\beta$ with $\gamma<\crit(j)<\alpha<j(\delta)$ and $j(\delta)\in C(n)$.
\end{definition}
\begin{definition}
  We say that \emph{Vop\v{e}nka's Principle} ($\VP$) holds if for every first-order language, $\cL$, if $\cC$ is a proper class of $\cL$-structures, then there is one which elementarily embeds into the other. We write $\VP(\Pi_n)$ to indicate that $\cC$ is definable by a $\Pi_n$ formula, and we use boldface to indicate that arbitrary parameters are used.
\end{definition}
\begin{fact}[Mohammd {\cite[Theorem~3.1]{Marwan}}]\label{fact:vp-equiv}
  For every $n$, $\VP(\boldsymbol{\Pi}_{n+1})$ is equivalent to the existence of a proper class of $n$-choiceless-extendible cardinals. In particular, $\VP$ is equivalent to the meta-theoretic schema that for every $n$ there exists a proper class of $n$-choiceless-extendible cardinals.
\end{fact}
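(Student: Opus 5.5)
Since the statement is quoted from Mohammd \cite{Marwan}, I would reprove it in $\ZF$ directly, proving both directions level by level. Throughout I use the standard reflection facts: for $\alpha\in C(n)$, $\Sigma_{n+1}$ statements with parameters in $V_\alpha$ go up from $V_\alpha$ to $V$, and $\Pi_{n+1}$ statements go down from $V$ to $V_\alpha$. Moreover, $C(n)$ is $\Pi_n$-definable and is a closed unbounded class; this uses only Lévy reflection, so it is available without choice.

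\emph{From extendibles to $\VP(\boldsymbol\Pi_{n+1})$.} Let $\cC$ be a proper class of $\cL$-structures, defined by a $\Pi_{n+1}$ formula $\psi(x,p)$.
\begin{enumerate}
\item Pick an $n$-choiceless-extendible $\delta$ with $p,\cL\in V_\delta$. First I show $\delta\in C(n)$: apply the definition with $\alpha\in C(n)\setminus\delta$, and use that $\crit(j)<\alpha$ while $j(\delta)\in C(n)$. The aim is to pull $\Sigma_n$ correctness back along $j$. I am not fully sure this argument closes; if it does not, I would pass to an $n$-choiceless-extendible $\delta$ that is itself a limit of $C(n)$-cardinals, which is possible since these $\delta$ form a proper class.
\item Next I show that $\cC\cap V_\delta$ contains structures of unboundedly large rank below $\delta$. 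The idea is to reflect the $\Sigma_{n+2}$ assertion ``there is $A$ with $\psi(A,p)$ of rank $>\xi$'' into $V_{j(\delta)}$ and pull it back via $j$.
\item Fix $A\in\cC$ with $\gamma\leq\rank(A)<\delta$, where $\gamma$ is chosen to bound the ranks of $p$ and $\cL$. Take $\alpha\in C(n)$ above $\delta$ and $j\colon V_\alpha\to V_\beta$ as in the definition with this $\gamma$. Since $\psi$ is $\Pi_{n+1}$, it holds of $A$ in $V_\alpha$. By elementarity $V_\beta\models\psi(j(A),p)$, and this holds already in $V_{j(\delta)}$ because $j(A)\in V_{j(\delta)}$ and $V_\delta\models\psi(A,p)$. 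Then $j(\delta)\in C(n)$ lifts $\psi(j(A),p)$ to $V$, so $j(A)\in\cC$.
\item Finally, $j\restriction A\colon A\to j(A)$ is an elementary embedding of $\cL$-structures, since $j(\cL)=\cL$.
\end{enumerate}
The main obstacle is guaranteeing $j(A)\neq A$, or at least that $A$ and $j(A)$ are different members of $\cC$. I would arrange this by choosing $A$ of rank at least $\crit(j)$, so that $\rank(j(A))=j(\rank A)>\rank A$. This requires first fixing $j$ from an ordinal $\gamma$ and then selecting $A$ of rank in $[\crit(j),\delta)$. That in turn needs $\crit(j)<\delta$, which I would try to secure by applying the definition with a suitably chosen $\alpha$ and analysing where $\crit(j)$ lands.

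\emph{From $\VP(\boldsymbol\Pi_{n+1})$ to extendibles.} Suppose the $n$-choiceless-extendibles are bounded, say by $\eta$. For each $\delta>\eta$ pick the least failure witness $(\alpha_\delta,\gamma_\delta)$, where $\alpha_\delta\in C(n)$ is least and $\gamma_\delta$ least for it. Then form the structure
\[\tup{V_{\alpha_\delta},\in,\delta,\gamma_\delta,\{\xi\}_{\xi\leq\eta}},\]
restricted to a club of $\delta$ so that the ranks are increasing. Being such a structure is $\Pi_{n+1}$: it combines $C(n)$-membership with the nonexistence of embeddings into $V_\beta$ with the stated properties, where the condition $j(\delta)\in C(n)$ is $\Pi_n$ and the quantification over $\beta$ is universal. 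So $\VP(\boldsymbol\Pi_{n+1})$ yields an elementary embedding $k$ between two of these structures. Composing and restricting $k$ gives an embedding witnessing extendibility at one of the $\delta$'s, which is a contradiction. This is the standard Bagaria-style coding, and it is routine once the complexity count is checked. The ``in particular'' clause then follows by ranging over all $n$.
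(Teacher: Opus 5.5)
The paper gives no proof of this Fact; it is quoted from Mohammd. So your reconstruction has to stand on its own, and the direction from extendibles to $\VP(\boldsymbol\Pi_{n+1})$ breaks at step~(3). From $V_{j(\delta)}\models\psi(j(A),p)$ you conclude $\psi(j(A),p)$ in $V$ ``because $j(\delta)\in C(n)$''. But $\psi$ is $\Pi_{n+1}$, and by the reflection facts you list yourself, $\Sigma_n$-correctness only moves $\Pi_{n+1}$ statements \emph{down} from $V$ to $V_{j(\delta)}$. Nothing in the definition controls $\Pi_{n+1}$ truth in $V_{j(\delta)}$ or in $V_\beta$, so $j(A)\in\cC$ is not established. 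The obstacles you flag ($\crit(j)<\delta$ and $j(A)\neq A$) also stay open, and the definition does allow $\crit(j)=\delta$.

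The missing idea is to reflect downwards instead of pushing forwards. Take $B\in\cC$ with $\rank(B)\geq\delta$, $\alpha\in C(n)$ with $B\in V_\alpha$, and $j$ with $\crit(j)$ above the ranks of $p,\cL\in V_\delta$. Then $V_{j(\delta)}\models\psi(B,p)$ legitimately, and $j\restriction B\in V_\beta$ witnesses that some $A\in V_{j(\delta)}$ with $V_{j(\delta)}\models\psi(A,p)$ (namely $A=B$) embeds elementarily into $j(B)$. Pulling back through $j$ gives $A\in V_\delta$ with $V_\delta\models\psi(A,p)$ and an elementary embedding $A\to B$; $A\neq B$ by rank, and no control of $\crit(j)$ is needed. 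To conclude $A\in\cC$ you need $\delta\in C(n+1)$. This follows from the definition by exactly the pull-back you were unsure about in step~(1), which does close. For $a\in V_\delta$, any $j$ with $\crit(j)>\rank(a)$ gives $V_\delta\models\varphi(a)\iff V_{j(\delta)}\models\varphi(a)$, so $\delta\in C(n)$. For a true $\Sigma_{n+1}$ statement $\exists x\,\theta(x,a)$, choose $\alpha$ so that $V_\alpha$ contains a witness; that witness lies in $V_{j(\delta)}$ and satisfies the $\Pi_n$ matrix there, so you can pull back again. Your fallback to a limit of $C(n)$ is therefore unnecessary, and it is also unjustified, since a proper class need not contain such a limit. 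Step~(2) has the same defect: it only yields $V_\delta\models\psi(A,p)$ unless you know $\delta\in C(n+1)$.

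The converse also misses a step. The constants $\{\xi\}_{\xi\leq\eta}$ only force $\crit(k)>\eta$, but refuting the failure at $\delta$ needs $\crit(k)>\gamma_\delta$, and $\gamma_\delta$ can lie anywhere below $\delta$. Naming $\gamma_\delta$ by a constant merely gives $k(\gamma_\delta)=\gamma_{\delta'}$. You must first fix one $\gamma$ that is a failure parameter for a proper class of $\delta$, and then name every $\xi\leq\gamma$. This can be done without choice: if for each $\gamma$ the class of $\delta\in C(n)\setminus(\eta+1)$ with $\gamma_\delta\leq\gamma$ were bounded by $h(\gamma)$, then a closure point of $h$ in $C(n)$ above $\eta$ would contradict $\gamma_\delta<\delta$. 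Also, a failure for $\gamma_\delta\leq\gamma$ is a failure for $\gamma$. You further need $k(\delta)=\delta'>\alpha_\delta$ with $\delta'\in C(n)$, which requires a thinning whose definability you do not check. Minimality of $(\alpha_\delta,\gamma_\delta)$ adds a conjunct of the form $\forall\alpha'<\alpha\,(\Sigma_{n+1})$, and the club of closure points of $\delta\mapsto\alpha_\delta$ is more complex still. So the claim that the class is $\Pi_{n+1}$ is where the real work lies, not a routine count.
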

Hayut and Karagila \cite{HK:Critical} provided a lifting theorem for elementary embedding for symmetric extension. In this paper we only need the very rudimentary application given by \cite[Theorem~4.9]{HK:Critical}, stating that if $\cS\in V_{\crit(j)}$, then $j$ lifts.
\section{Preservation}
In this section, fix a model of $\ZF$, $V$, and a symmetric system, $\cS$. We will denote by $W$ the resulting symmetric extension. As standard, $V_\alpha$ and $W_\alpha$ denote the $\alpha$th initial segment in the von Neumann hierarchy of these two models.
\begin{lemma}\label{lemma:emb-lift}
  If $\alpha$ is $\Sigma_1$-correct and $j\colon V_\alpha\to V_\beta$ is an elementary embedding such that $\cS\in V_{\crit(j)}$, then $j$ lifts to $j\colon W_\alpha\to W_\beta$.
\end{lemma}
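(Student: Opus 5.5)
The natural approach is to define the lift on names: for $x\in W_\alpha$ pick a hereditarily symmetric name $\dot x$ with $\dot x^G=x$, and set $j(x)=j(\dot x)^G$. We work with $G$ a $V$-generic filter for $\PP$, where $\cS=\tup{\PP,\mathscr G,\mathscr F}$. Since $\cS\in V_{\crit(j)}$, we have $j(\cS)=\cS$, so $j(\PP)=\PP$, $j(p)=p$ for every condition, and $j$ fixes the automorphism group and the filter of subgroups. Hence $j(\dot x)$ is again an $\HS$ name for the same system, and $G$ is also generic for $j(\PP)$. This is exactly the setting of \cite[Theorem~4.9]{HK:Critical}. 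The point of the present lemma is to check that the lift restricts correctly to the level-by-level structures $W_\alpha\to W_\beta$, and here $\Sigma_1$-correctness together with Lemma~\ref{lemma:name-rank} does the work.

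\textbf{Step 1 (names live in $V_\alpha$).} We show that every $x\in W_\alpha$ has an $\HS$ name in $V_\alpha$. Given $x$ of rank $\gamma<\alpha$, I would show by induction on $\gamma$ that $x$ has an $\HS$ name $\dot x$ with $\rank_\PP(\dot x)$ bounded by a function of $\gamma$ and $\rank(\PP)$ that stays below $\alpha$. The standard argument takes a name for $x$ and replaces it by $\{\tup{p,\dot y}: \dot y\in \HS\cap V^\PP_{\gamma'},\ p\forces \dot y\in\dot x\}$, which is symmetric with the same group. An easier route is to use $\Sigma_1$-correctness directly: the statement ``there is an $\HS$ name $\dot x$ and $p\in G$ with $p\forces \dot x=\check{\ }\!$-description of $x$'' cannot be formulated in $V$, since $x\notin V$. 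So I would instead use the rank-bounded name construction. It shows that names of rank $\leq\gamma+$ small suffice, and then Lemma~\ref{lemma:name-rank} gives $\dot x\in V_\alpha$. The same argument applies to $\beta$, provided $\beta$ is also a limit of the right kind. This follows from elementarity: $V_\beta$ satisfies the same first-order facts (for example $\forall\gamma\,\exists V_{\gamma+\omega}$-type closure) that $V_\alpha$ does. Conversely, every $\HS$ name in $V_\alpha$ evaluates to a set of rank $<\alpha$.

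\textbf{Step 2 (well-definedness and elementarity).} Since $\alpha\in C(1)$, the forcing relation for bounded-quantifier statements about names in $V_\alpha$, together with the symmetric forcing relation $\forces_\HS$ relativised to $W_\alpha$, is computed correctly inside $V_\alpha$. Thus $W_\alpha\models\varphi(\dot x_1^G,\dots)$ iff some $p\in G$ has $V_\alpha\models$``$p\forces_{\HS}\varphi^{W}(\dot x_1,\dots)$'', using the definable symmetric forcing relation relativised to rank $<\alpha$. Applying $j$ and $j(p)=p$ yields the corresponding statement in $V_\beta$ about $j(\dot x_i)$, hence $W_\beta\models\varphi(j(\dot x_1)^G,\dots)$. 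Taking $\varphi$ to be $x=y$ gives well-definedness, and the general case gives elementarity.

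\textbf{Main obstacle.} I expect the hardest point to be the claim that forcing over $V_\alpha$ correctly describes $W_\alpha$. This uses the truth lemma inside $V_\alpha$, which is a model of enough of $\ZF$ only in a weak sense, and it must then be transferred to $V_\beta$, where we lack correctness. My first attempt would be to express ``$W_\alpha\models\varphi$'' as ``$V_\alpha\models p\forces^{\HS}\varphi^{\{\dot x:\rank_\PP\dot x<\mathrm{Ord}\}}$''. This works because the forcing relation for $\Delta_0$ formulas over $V_\gamma^\PP$ is defined by recursion on name rank and is absolute between $V_\alpha$, $V_\beta$ and $V$. If this absoluteness fails for $\beta$, the fallback is to quote the lifting of \cite[Theorem~4.9]{HK:Critical} for the full class embedding restricted suitably.
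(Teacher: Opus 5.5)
Your proposal is correct and follows essentially the same route as the paper. You identify $W_\alpha$ and $W_\beta$ with the symmetric extensions of $V_\alpha$ and $V_\beta$ by $\cS$, using indecomposability, which for $\beta$ you rightly get from elementarity rather than correctness; you then use $j(\cS)=\cS$ to lift via $j(\dot x^G)=j(\dot x)^G$, which is the argument behind \cite[Theorem~4.9]{HK:Critical} made explicit. The obstacle you flag for $\beta$ is resolved as you suggest: the forcing relation is defined by recursion on name rank, hence absolute for a suitable limit $\beta$, and $G$, being $V$-generic, meets every dense set definable over $V_\beta$; so your fallback to a ``full class embedding'', which is not available here anyway, is never needed.
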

\begin{proof}
  We use \cite[Theorem~4.9]{HK:Critical}, which states that any elementary embedding whose critical point lies above the von Neumann rank of the symmetric system lifts. Let us check that if $\alpha$ is indecomposable, then $j\colon V_\alpha\to V_\beta$ lifts to $j\colon W_\alpha\to W_\beta$. First, as $\alpha>\rank(\PP)$ is a limit ordinal, $W_\alpha$ is the symmetric extension of $V_\alpha$ by $\cS$, and similarly $W_\beta$ is the symmetric extension of $V_\beta$ by $\cS$. As $j(\cS)=\cS$, the embeddings lift correctly.
\end{proof}
\begin{lemma}\label{lemma:correctness-lift}
  Let $V$ be a model of $\ZF$ and $\lambda\in C(n)$. If $\cS\in V_\lambda$, then $\1\forces_\cS\check\lambda\in C(n)$.
\end{lemma}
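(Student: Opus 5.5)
We have to show that in the symmetric extension $W$, $W_\lambda\prec_{\Sigma_n} W$. The plan is to reduce $\Sigma_n$-truth in $W$ to $\Sigma_n$-truth in $V$ about the symmetric forcing relation, and then use $\lambda\in C(n)$ in $V$.

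\textbf{Step 1: initial segments.} Since $\lambda\in C(1)$, it is a limit ordinal and indecomposable; moreover $\cS\in V_\lambda$, so $\rank(\PP)<\lambda$. By Lemma~\ref{lemma:name-rank}, every name of $\PP$-rank below $\lambda$ already lies in $V_\lambda$. As in the proof of Lemma~\ref{lemma:emb-lift}, $W_\lambda$ is exactly the symmetric extension of $V_\lambda$ by $\cS$. More precisely, $W_\lambda$ is the set of interpretations of hereditarily symmetric names lying in $V_\lambda$. The point to verify is that the notion ``$\dot x$ is hereditarily symmetric'' is absolute between $V_\lambda$ and $V$. This holds because the notion is defined by recursion on names, and all the relevant objects (the group, the filter, and the automorphisms acting on names) live in $V_\lambda$.

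\textbf{Step 2: forcing relation.} Consider a formula $\varphi$. The relation $p\forces_\cS\varphi(\dot a)$, restricted to $\HS$ names, is definable in $V$. I would argue by induction on $n$ that, for $\varphi\in\Sigma_n$, the relation $p\forces_\cS \varphi(\dot a)$ is $\Sigma_n$ in $V$, with parameter $\cS$, for $n\geq 1$. The base case, for $\Delta_0$ formulas, needs care: there the forcing relation is defined by recursion on names, which is $\Delta_1$. The inductive step for an existential quantifier $\exists x\,\psi$ is handled by the clause ``densely many $q\leq p$ have some $\dot x\in\HS$ with $q\forces\psi$'', which adds an unbounded quantifier of the right type. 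The universal step uses the clause ``for all $\dot x\in\HS$ and all $q\leq p$, there is $r\leq q$ with \ldots''.

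This definability estimate is the \emph{main obstacle}. Forcing over $\ZF$ does not let us bound names by rank as easily as a reflection-style argument would like, and the negation and universal clauses must be checked not to raise the complexity. As a fallback, I would use a cruder but sufficient fact: since $\lambda\in C(n)$ and $\cS\in V_\lambda$, the relation $\forces_\cS$ restricted to $\Sigma_n$ formulas computed in $V_\lambda$ agrees with the one computed in $V$. This agreement can be proved by a simultaneous induction, using $V_\lambda\prec_{\Sigma_n}V$ at each quantifier.

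\textbf{Step 3: conclusion.} Let $\varphi$ be $\Sigma_n$ and let $a\in W_\lambda$, so that $a=\dot a^G$ with $\dot a\in\HS\cap V_\lambda$ by Step 1. Then
\[
W\models\varphi(a)\iff \exists p\in G\; (p\forces_\cS\varphi(\dot a))^V.
\]
By Step 2 the inner statement is $\Sigma_n$ in $V$ with parameters in $V_\lambda$, so it is equivalent to $(p\forces_\cS\varphi(\dot a))^{V_\lambda}$. By Step 1, that in turn is equivalent to $W_\lambda\models\varphi(a)$. Since $G$ was arbitrary, we obtain $\1\forces_\cS\check\lambda\in C(n)$.
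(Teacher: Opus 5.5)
Your proposal is correct and follows essentially the paper's route. Like the paper, you show that the symmetric forcing relation for $\Sigma_n$ formulas is itself $\Sigma_n$ in $V$ with parameters in $V_\lambda$, transfer it via $V_\lambda\prec_{\Sigma_n}V$, and read off the result in $W_\lambda$ as the symmetric extension of $V_\lambda$ by $\cS$. The definability estimate you flag as the main obstacle is exactly what the paper asserts as standard. It goes through as you sketched, because $\ZF$ proves collection, so the bounded quantifiers over $\PP$ in the density clause do not raise complexity; your fallback is therefore unnecessary, and it relies on the same estimate anyway.
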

\begin{proof}
  We prove this by induction on $n$. For $n=0$ this is trivial, as $W_\alpha$ is a transitive set. Let $a\in W_\alpha$ and let $\dot a\in\HS$ be a name for $a$. Then we have that $\dot a\in V_\alpha$. Suppose that $p\in\PP$ such that $p\forces_\cS\exists x\varphi(x,\dot a)$, where $\varphi$ is a $\Sigma_0$-formula.

  Then ``$p\forces_\cS\exists x\varphi(x,\dot a)$'' is itself a $\Sigma_1$-formula with $\PP$ as a parameter. It states that there exists densely many extensions $q\leq p$, and names $\dot x\in\HS$, such that $q\forces_\cS\varphi(\dot x,\dot a)$. As $\varphi$ is a bounded formula, this is the same as $q\forces\varphi(\dot x,\dot a)$. It is not hard to check that this is a $\Sigma_0$-statement in $V_\alpha$, using the appropriate parameters. As $\alpha\in C(1)$, $V$ and $V_\alpha$ agree, and the conclusion follows. The proof is similar for $\Pi_1$, when $W_\alpha\models\forall x\varphi(x,a)$, and the induction is standard.
\end{proof}
The following is a trivial application of \autoref{fact:vp-equiv} with the two previous lemmas.
\begin{theorem}
  If $V\models\VP$, then every symmetric extension satisfies $\VP$.\qed
\end{theorem}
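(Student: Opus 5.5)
By \autoref{fact:vp-equiv}, applied inside $W$, it suffices to show the following: for each $n$, $W$ has a proper class of $n$-choiceless-extendible cardinals. Since $\VP$ in $V$ gives, for every $n$, a proper class of $n$-choiceless-extendible cardinals in $V$, the plan is to prove that every $n$-choiceless-extendible cardinal $\delta$ of $V$ with $\cS\in V_\delta$ stays $n$-choiceless-extendible in $W$. There are a proper class of such $\delta$ above $\rank(\cS)$, and this transfers the proper class to $W$. We may assume $n\geq 1$, since $n$-choiceless-extendibility implies $m$-choiceless-extendibility for $m\leq n$.

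Fix such a $\delta$, an ordinal $\alpha\in C(n)^W\setminus\delta$, and $\gamma<\delta$. The first task is to get $\alpha\in C(n)^V$. \autoref{lemma:correctness-lift} only goes from $V$ to $W$, so I will not try to reflect this directly. Instead I would replace $\alpha$ by a larger $\alpha'\in C(n)^V$: because $C(n)^V$ is closed unbounded, pick $\alpha'\in C(n)^V$ with $\alpha'>\alpha$ and $\cS\in V_{\alpha'}$. We may also take $\gamma\geq\rank(\cS)$, which is harmless since $\rank(\cS)<\delta$.

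Now apply $n$-choiceless-extendibility of $\delta$ in $V$ to $\alpha'$ and $\gamma$. This gives $j\colon V_{\alpha'}\to V_\beta$ with $\gamma<\crit(j)<\alpha'<j(\delta)$ and $j(\delta)\in C(n)^V$. Since $\cS\in V_{\crit(j)}$ and $\alpha'\in C(1)$, \autoref{lemma:emb-lift} lifts $j$ to $j\colon W_{\alpha'}\to W_\beta$. Next, $\cS\in V_{j(\delta)}$ and $j(\delta)\in C(n)^V$, so \autoref{lemma:correctness-lift} gives $j(\delta)\in C(n)^W$. Restrict the lifted embedding to $W_\alpha$. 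It is elementary from $W_\alpha$ into $W_{j(\alpha)}$, because $\alpha$ is definable in $W_{\alpha'}$ as a parameter and $j(W_\alpha)=W_{j(\alpha)}$ by elementarity. The bound $\alpha<j(\delta)$ holds because $\alpha<\alpha'<j(\delta)$. Its critical point is still $\crit(j)>\gamma$, which is below $\delta\leq\alpha$.

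The step I expect to be the main obstacle is the restriction. The definition requires $j\colon W_\alpha\to W_{\beta'}$ for some $\beta'$, and $j\restriction W_\alpha$ lands in $W_{j(\alpha)}$. So we need $\alpha<\alpha'$ and the identity $j(W_\alpha)=W_{j(\alpha)}$, which holds because ``$x=W_\alpha$'' is absolute between $W_{\alpha'}$ and $W$ when $\alpha'$ is a limit. A second point to check is that the lifting in \autoref{lemma:emb-lift} sends $j(\delta)$ to the same ordinal as before, which is clear because ordinals are not moved by the lift. If working with $\alpha$ in $C(n)^W$ caused trouble, an alternative is to prove that $C(n)^V\cap(\rank(\cS),\infty)\subseteq C(n)^W$ via \autoref{lemma:correctness-lift} and use the cofinality argument above.
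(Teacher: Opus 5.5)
Your proposal is correct and follows the paper's argument: \autoref{fact:vp-equiv} applied in $W$, together with \autoref{lemma:emb-lift} and \autoref{lemma:correctness-lift}. Your extra step, restricting the lifted embedding of $W_{\alpha'}$ with $\alpha'\in C(n)^V$ down to $W_\alpha$, supplies what the paper's ``trivial application'' leaves implicit: the lemmas only give $C(n)^V\setminus\delta\subseteq C(n)^W$, not the converse.

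Two small justifications are missing. First, $\crit(j)<\alpha$ holds because $j(\delta)>\alpha'>\delta$ forces $\crit(j)\leq\delta$, and $\delta<\alpha$, which the definition tacitly requires for $j(\delta)$ to be defined. Second, the reduction to $n\geq 1$ is not immediate from the definition, since a smaller $m$ quantifies over more $\alpha$. It does follow from the same restriction trick, or you can simply take $\alpha'\in C(1)^V$ when $n=0$.
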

As every generic extension is also a symmetric extension, then the following is a trivial corollary of the theorem, based on the equivalence of $\SVC$ with ``there is a partial order which forces the Axiom of Choice''.
\begin{corollary}
  If $V\models\VP+\SVC$, then $\ZFC+\VP$ holds in a generic extension.\qed
\end{corollary}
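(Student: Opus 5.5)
The plan is to work schematically, one $n$ at a time. By \autoref{fact:vp-equiv}, it suffices to show in $W$, for each fixed $n$, that there is a proper class of $n$-choiceless-extendible cardinals. To get these, fix $n$ and apply \autoref{fact:vp-equiv} in $V$ with $n+1$ (or a slightly larger index if the bookkeeping needs it). This gives a proper class of $n$-choiceless-extendible cardinals $\delta$ in $V$. Fix such a $\delta$ with $\cS\in V_\delta$; since there are unboundedly many, we can also demand $\delta$ above any given ordinal. The claim is then that $\delta$ remains $n$-choiceless-extendible in $W$, which yields a proper class in $W$.

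To verify this, let $\alpha\in C(n)^W$ with $\alpha\geq\delta$ and $\gamma<\delta$. The first problem is that $\alpha$ is only known to be correct in $W$, while the $V$-definition of extendibility requires $\alpha\in C(n)^V$. I would pick $\alpha'\geq\alpha$ with $\alpha'\in C(n)^V$ (and $\alpha'\in C(1)$, with $n\geq1$ assumed). By \autoref{lemma:correctness-lift}, $\alpha'\in C(n)^W$. In $V$, choose $j\colon V_{\alpha'}\to V_{\beta'}$ with $\max(\gamma,\rank(\cS))<\crit(j)<\alpha'<j(\delta)$ and $j(\delta)\in C(n)^V$. Here $\crit(j)$ lies above $\rank(\cS)$, which is possible because we may raise $\gamma$ beyond $\rank(\cS)<\delta$. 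By \autoref{lemma:emb-lift}, $j$ lifts to $j\colon W_{\alpha'}\to W_{\beta'}$, which is elementary with the same critical point and the same value $j(\delta)$. Since $j(\delta)\in C(n)^V$ and $\cS\in V_{j(\delta)}$, \autoref{lemma:correctness-lift} gives $j(\delta)\in C(n)^W$.

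It remains to go from $\alpha'$ back down to $\alpha$. Restrict the lifted $j$ to $W_\alpha$, giving $j\restriction W_\alpha\colon W_\alpha\to W_{j(\alpha)}$, where $j(\alpha)$ is computed inside $W_{\alpha'}$ if $\alpha<\alpha'$. Elementarity of the restriction holds because $W_\alpha$ is definable in $W_{\alpha'}$ from the parameter $\alpha$, so that $j(W_\alpha)=W_{j(\alpha)}$ and satisfaction in $W_\alpha$ relativizes. The inequalities $\crit(j)<\alpha<j(\delta)$ persist, since $\alpha\leq\alpha'<j(\delta)$ (or trivially if $\alpha=\alpha'$). If $\alpha=\alpha'$, the conclusion is immediate.

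The main obstacle is the case $\alpha<\alpha'$. There, $\alpha$ may not be an element of $W_{\alpha'}$ as a point where $j$ acts nicely, and we need $\alpha<j(\delta)$, which is clear. The step I expect to need care is justifying that the relevant ordinals lie in $C(n)^V$ at all, i.e.\ that $C(n)^V$ is unbounded. This is standard reflection in $\ZF$. Also, \autoref{lemma:emb-lift} and \autoref{lemma:correctness-lift} require $\cS\in V_{\crit(j)}$ and $\cS\in V_\lambda$, and both are secured by the choice of $\gamma$ and $\delta$. A cleaner alternative, if restriction is awkward, is to note that choiceless extendibility is equivalent to the version quantified only over a cofinal subclass of $C(n)$, and to use $C(n)^V\cap C(n)^W$ directly.
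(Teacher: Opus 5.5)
Your proposal never uses $\SVC$ and never produces a model of the Axiom of Choice. Those are exactly what separate this corollary from the preceding theorem.

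What you have written argues that an arbitrary symmetric extension $W$ of $V$ satisfies $\VP$, i.e.\ it reproves the theorem. As such it is reasonable. Going up from $\alpha\in C(n)^W$ to some $\alpha'\geq\alpha$ in $C(n)^V$, lifting there, and restricting the lifted embedding back to $W_\alpha$ is a sensible way to handle the fact that \autoref{lemma:correctness-lift} only gives $C(n)^V\subseteq C(n)^W$ above $\rank(\cS)$, a point the paper passes over. For the restriction step, note that $\crit(j)\leq\delta$, since otherwise $j(\delta)=\delta\leq\alpha'$.

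But a symmetric extension typically fails $\mathsf{AC}$. Nothing in your argument selects a forcing or explains why choice would hold in the model you end up with. So the conclusion ``$\ZFC+\VP$ holds in a generic extension'' is never reached.

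The missing step is short, and it is exactly the paper's argument:
\begin{itemize}
\item By the equivalence (due to Blass) of $\SVC$ with the existence of a forcing notion $\PP$ such that $\1\forces_\PP\mathsf{AC}$, fix such a $\PP$ and let $G$ be $V$-generic for it.
\item Then $V[G]$ is itself a symmetric extension of $V$, namely by the system $\cS=\tup{\PP,\{\mathrm{id}\},\{\{\mathrm{id}\}\}}$ with trivial automorphism group. Every $\PP$-name is hereditarily symmetric for this system, so $\HS=V^\PP$ and the symmetric extension is all of $V[G]$.
\item Applying the theorem (or your argument with this $\cS$) gives $V[G]\models\VP$.
\item Finally, $V[G]\models\ZFC$ by the choice of $\PP$.
\end{itemize}
This is what your proposal needs to add.
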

\providecommand{\bysame}{\leavevmode\hbox to3em{\hrulefill}\thinspace}
\providecommand{\MR}{\relax\ifhmode\unskip\space\fi MR }
% \MRhref is called by the amsart/book/proc definition of \MR.
\providecommand{\MRhref}[2]{%
  \href{http://www.ams.org/mathscinet-getitem?mr=#1}{#2}
}
\providecommand{\href}[2]{#2}

\end{document}